\newtheorem{theorem}{Theorem}
\newtheorem{definition}{Definition}
\newtheorem{lemma}{Lemma}
\newtheorem{remark}{Remark}
\newcommand{\differential}{{\rm{d}}}
\title{\LARGE \bf
DeGroot-Friedkin Map in Opinion Dynamics is Mirror Descent
}
\author{Abhishek Halder
\thanks{Abhishek Halder is with the Department of Applied Mathematics, University of California, Santa Cruz, CA 95064, USA,
        {\tt\small{ahalder@ucsc.edu}}%
}}
\begin{document}

\maketitle
\thispagestyle{empty}
\pagestyle{empty}

\begin{abstract}
We provide a variational interpretation of the DeGroot-Friedkin map in opinion dynamics. Specifically, we show that the nonlinear dynamics for the DeGroot-Friedkin map can be viewed as mirror descent on the standard simplex with the associated Bregman divergence being equal to the generalized Kullback-Leibler divergence, i.e., an entropic mirror descent. Our results reveal that the DeGroot-Friedkin map elicits an individual's social power to be close to her social influence while minimizing the so called ``extropy" -- the entropy of the complimentary opinion. 
\end{abstract}


\section{Introduction}
The DeGroot-Freidkin map \cite{jia2015opinion} in opinion dynamics is a nonlinear recursion of the form 
\begin{eqnarray*}
\bm{x}(k+1) = \bm{f}\left(\bm{x}(k)\right), \quad \bm{f} : \Delta^{n-1} \mapsto \Delta^{n-1},
\end{eqnarray*}
where $\Delta^{n-1}:=\{\bm{x}\in\mathbb{R}^{n}_{\geq 0}\:\mid\:\bm{1}^{\top}\bm{x}=1\}$ denotes the standard simplex in $\mathbb{R}^{n}$, i.e., convex hull of the standard basis vectors $\bm{e}_{1}, \hdots, \bm{e}_{n}$ in $\mathbb{R}^{n}$. Let ${\rm{int}}\left(\Delta^{n-1}\right):=\{\bm{x}\in\mathbb{R}^{n}_{>0}\:\mid\:\bm{1}^{\top}\bm{x}=1\}$ denote the interior of this simplex. The state vector $\bm{x} = \left(x_{1}, \hdots, x_{n}\right)^{\top}$ models the self-weights of $n$ individuals exchanging opinions in a social network on a particular issue. The recursion index $k=0,1,...$ codifies a sequence of issues. The map $\bm{f}$ depends on a parameter vector $\bm{c}\in{\rm{int}}\left(\Delta^{n-1}\right)$, which is the Perron-Frobenius left eigenvector of an $n\times n$ row stochastic, zero-diagonal, irreducible\footnote[2]{A nonnegative matrix is irreducible if its associated digraph is strongly connected. The digraph associated with an $n\times n$ nonnegative matrix is constructed by adding a directed edge from node $i$ to $j$, where $i,j=1,...,n$, provided the $(i,j)$-th element of the matrix is positive.} matrix $\bm{C}$, typically referred to as the ``relative influence" or ``relative interaction matrix". As in the original DeGroot-Friedkin model, we will assume that the matrix $\bm{C}$ is constant. Under the stated structural assumptions on matrix $\bm{C}$, the vector $\bm{c}$  satisfies (see e.g., \cite[Lemma 2.3, part (i)]{jia2015opinion}) $0<c_{i}\leq 1/2$ for all $i=1,\hdots,n$. Intuitively, the elements of the matrix $\bm{C}$ model the relative influence of an individual's social network in her opinion, and they affect the opinion dynamics via vector $\bm{c}$. Thus, the DeGroot-Freidkin map describes how the self-weights of a group of individuals evolve over a sequence of issues accounting that social interactions influence opinion.

To ease notation, let $[n] := \{1,\hdots,n\}$. In the DeGroot-Freidkin model, the map $\bm{f}\left(\bm{x}(k)\right)$ is explicitly given by
\begin{equation}
\begin{aligned}
	\bm{f}(\bm{x}(k)) \!=\!\!\begin{cases}
 \bm{e}_{i} \qquad\qquad\qquad\qquad\text{if}\;\:\bm{x}(k)=\bm{e}_{i},\; i\in[n],\\
 	\displaystyle\frac{\bm{c} \oslash \left(\bm{1} - \bm{x}(k)\right)}{\bm{1}^{\top}\left(\bm{c} \oslash \left(\bm{1} - \bm{x}(k)\right)\right)} \;\text{otherwise},
\end{cases}	
\end{aligned}
\label{DeGrootFriedkinMap}
\end{equation}
for $k=0,1,\hdots$, where the symbol $\oslash$ denotes element-wise division, and $\bm{1}$ denotes the column vector of ones. The explicit form of the recursion appeared first in \cite[Lemma 2.2]{jia2015opinion}, and was proposed as a combination of the DeGroot model \cite{degroot1974reaching} and the Friedkin's model of reflected appraisal \cite{friedkin2011formal} in the evolution of social power (therefore, the name ``DeGroot-Friedkin model"). Various extensions of the basic DeGroot-Friedkin model have appeared in \cite{chen2018social,xu2015modified,askarzadeh2019stability}.

The convergence properties for the DeGroot-Friedkin map depend on whether the digraph associated with $\bm{C}$ has star topology or not. An $n$-vertex digraph has star topology if there exists a node $i\in[n]$, referred to as the ``center node", so that all directed edges of the digraph share the $i$-th vertex. In the opinion dynamics context, interpreting the vertices of the digraph as individuals, existence of star topology means that a single individual holds the social power to influence the opinion of the group.  

From (\ref{DeGrootFriedkinMap}), it is evident that the map $\bm{f}$ leaves the vertices of the simplex invariant, and hence $\{\bm{e}_{i}\:\mid\:i\in[n]\}$ are fixed points. For\footnote[3]{The DeGroot-Friedkin dynamics for the map (\ref{DeGrootFriedkinMap}) is degenerate for $n=2$ since in that case, $c_{1}=c_{2}=1/2$ and all points on the simplex are fixed points. In this paper, we thus consider $n\geq 3$.} $n\geq 3$, it is known \cite[Theorem 4.1]{jia2015opinion} that in addition to the simplex vertices, there exists a unique fixed point $\bm{x}^{*}\in{\rm{int}}\left(\Delta^{n-1}\right)$, provided the digraph associated with $\bm{C}$ does not have star topology. In that case, for all initial conditions $\bm{x}(0)\in\Delta^{n-1}\setminus\{\bm{e}_{1}, \hdots, \bm{e}_{n}\}$, the iterates $\bm{x}(k) \rightarrow \bm{x}^{*}$ as $k\rightarrow\infty$. On the other hand, if the digraph associated with $\bm{C}$ has star topology, then the simplex vertices $\{\bm{e}_{i}\:\mid\:i\in[n]\}$ are the only fixed points \cite[Lemma 3.2]{jia2015opinion}, and for all initial conditions $\bm{x}(0)\in\Delta^{n-1}\setminus\{\bm{e}_{1}, \hdots, \bm{e}_{n}\}$, the iterates $\bm{x}(k) \rightarrow \bm{e}_{i}$ as $k\rightarrow\infty$, where $i\in[n]$ is the index of the center node. In the rest of this paper, we will tacitly assume that the digraph associated with $\bm{C}$ does not have star topology, i.e., the map (\ref{DeGrootFriedkinMap}) admits $(n+1)$ fixed points $\{\bm{e}_{1}, ..., \bm{e}_{n}, \bm{x}^{*}\}$ where $\bm{x}^{*}\in{\rm{int}}\left(\Delta^{n-1}\right)$.

We can interpret the vertices of the simplex as ``autocratic" fixed points. The fixed point $\bm{x}^{*}$ in the interior of the simplex, is purely ``democratic" when it is equal to $\bm{1}/n$, which happens if and only if $\bm{C}$ is doubly stochastic. In general, the location of $\bm{x}^{*}\in{\rm{int}}\left(\Delta^{n-1}\right)$ depends on the parameter vector $\bm{c}$ (or equivalently, on the matrix $\bm{C}$).

The results mentioned in the preceding two paragraphs were derived in \cite{jia2015opinion} through Lyapunov analysis. The purpose of this paper is to present a variational interpretation of the opinion dynamics for the DeGroot-Friedkin map. Specifically, we show that the DeGroot-Friedkin map can be viewed as mirror descent of a convex function on the standard simplex with the associated Bregman divergence being equal to the generalized Kullback-Leibler divergence. On one hand, our development provides novel geometric insight for the opinion dynamics on standard simplex. On the other hand, it answers the natural question: what is the collective utility (i.e., ``social welfare") that the DeGroot-Friedkin map elicits over a given influence network?

This paper is organized as follows. Section \ref{SectionLabelMirrorDescent} provides an expository overview of mirror descent. Our main results are collected in Section \ref{SectionLabelMainResults}. Several implications of our variational interpretation are provided in Section \ref{SectionLabelRamification}. Section \ref{SectionLabelConclusions} concludes the paper.

\subsubsection*{Notations and preliminaries} We denote the entropy of a vector $\bm{p}\in\Delta^{n-1}$ as $H(\bm{p}):= -\sum_{i=1}^{n}p_{i}\log p_{i}$, and the Kullback-Leibler divergence between $\bm{p},\bm{q}\in\Delta^{n-1}$ as $D_{\rm{KL}}\left(\bm{p}\parallel\bm{q}\right):=\sum_{i=1}^{n}p_{i}\log\left(p_{i}/q_{i}\right)$. As is well-known, both $H(\bm{p})$ and $D_{\rm{KL}}\left(\bm{p}\parallel\bm{q}\right)$ are $\geq 0$. In this paper, the operators $\log(\cdot)$ and $\exp(\cdot)$ are to be understood element-wise. 
Given vectors $\bm{\alpha}=(\alpha_{1}, \hdots, \alpha_{n})^{\top}$ and $\bm{\beta}=(\beta_{1}, \hdots, \beta_{n})^{\top}$, we denote element-wise multiplication and division as $\bm{\alpha}\odot\bm{\beta} := (\alpha_{1}\beta_{1}, \hdots, \alpha_{n}\beta_{n})^{\top}$ and $\bm{\alpha}\oslash\bm{\beta} := (\alpha_{1}/\beta_{1}, \hdots, \alpha_{n}/\beta_{n})^{\top}$, respectively. By ${\rm{diag}}(\bm{\alpha})$ we mean a diagonal matrix with diagonal elements being equal to the entries of the vector $\bm{\alpha}$. The notations ${\rm{dom}}(\cdot)$ and ${\rm{range}}(\cdot)$ stand for domain and range of a function, respectively; ${\rm{cl}}(\cdot)$ stands for closure of an open set; ${\rm{bdy}}(\cdot)$ stands for boundary of a closed set. By closure ${\rm{cl}}(\cdot)$ of a function, we mean that its epigraph is a closed set. We use $\langle\cdot,\cdot\rangle$ to denote the standard Euclidean inner product. The Legendre-Fenchel conjugate \cite[Section 12]{rockaller1970} of  a function $\theta:\mathbb{R}^{n}\mapsto\mathbb{R}$, is $\theta^{*}:\mathbb{R}^{n}\mapsto\mathbb{R}$ given by
\[\theta^{*}(\bm{y}) = \underset{\bm{x}}{\sup}\: \{\langle\bm{y},\bm{x}\rangle - \theta(\bm{x})\}.\] 
We clarify here the notation that a function with superscript $*$ denotes the Legendre-Fenchel conjugate, while a vector with superscript $*$ denotes fixed point. The following property of the Legendre-Fenchel conjugate will be useful in this paper. Let $\tau(\bm{x}):=\theta(\bm{A}\bm{x}+\bm{b})$ where $\bm{A}\in\mathbb{R}^{n\times n}$ is nonsingular, and $\bm{b}\in\mathbb{R}^{n}$. Then 
\begin{eqnarray}
	\tau^{*}(\bm{y}) = \theta^{*}(\bm{A}^{-\top}\bm{y}) - \bm{b}^{\top}\bm{A}^{-\top}\bm{y}.
	\label{conjugatePostCompositionAffine}
\end{eqnarray}


\section{Mirror Descent}\label{SectionLabelMirrorDescent}
The mirror descent \cite{nemirovsky1983problem} is a generalization of the well-known projected gradient descent algorithm to account the pertinent geometry of the optimization problem. Recall that for solving a convex optimization problem of the form
\begin{eqnarray}
\underset{\bm{x}\in\mathcal{X}}{\text{minimize}}\:\phi(\bm{x}),
\label{projgradproblem}	
\end{eqnarray}
(i.e., $\phi(\cdot)$ is a convex function; $\mathcal{X}\subset\mathbb{R}^{n}$ is a convex set), the projected gradient descent with constant step-size $h>0$ is a two-step algorithm, given by
\begin{subequations}
\begin{align}
\bm{y}(k+1) &= \bm{x}(k) - h\bm{g}({k}), \label{step1projgraddescent}\\
\bm{x}(k+1) &= \text{proj}_{\mathcal{X}}^{\parallel\cdot\parallel_{2}}\left(\bm{y}(k+1)\right), \label{step2projgraddescent}
\end{align}
\label{projgraddescent}	
\end{subequations}
where the Euclidean projection operator $\text{proj}_{\mathcal{X}}^{\parallel\cdot\parallel_{2}}\left(\bm{\eta}\right) := \underset{\bm{\xi}\in\mathcal{X}}{\arg\min}\frac{1}{2}\parallel\bm{\xi} - \bm{\eta}\parallel_{2}^{2}$, the subgradient $\bm{g}({k})\in\partial\phi(\bm{x}(k))$ (the subdifferential), and $k=0,1,\hdots$. The mirror descent generalizes (\ref{projgraddescent}) by introducing the so-called \emph{mirror map} and its associated \emph{Bregman divergence} \cite{bregman1967relaxation}.
\begin{definition}\label{DefnMirrorMap}(\textbf{Mirror map})
Given the convex optimization problem (\ref{projgradproblem}), suppose $\psi(\cdot)$ is a differentiable, strictly convex function on an open convex set ${\rm{dom}}(\psi) \subseteq\mathbb{R}^{n}$, i.e., $\psi:{\rm{dom}}(\psi)\mapsto \mathbb{R}$, such that the constraint set $\mathcal{X}\subset{\rm{cl}}({\rm{dom}}(\psi))$, ${\rm{range}}(\nabla\psi)=\mathbb{R}^{n}$, and $\parallel \nabla\psi \parallel_{2}\:\rightarrow +\infty$ as $\bm{x} \rightarrow {\rm{bdy}}({\rm{cl}}({\rm{dom}}(\psi)))$. Then $\psi(\cdot)$ is called a mirror map.
\end{definition}

\begin{definition}\label{DefnBregmanDiv}(\textbf{Bregman divergence})
	Let $\psi(\cdot)$ be a mirror map as in Definition \ref{DefnMirrorMap}. The associated Bregman divergence $D_{\psi} : {\rm{dom}}(\psi)\times{\rm{dom}}(\psi)\mapsto\mathbb{R}_{\geq 0}$ is given by
\begin{eqnarray}
D_{\psi}(\bm{x},\bm{y}) := \psi(\bm{x}) - \bigg\{\psi(\bm{y}) +\langle\nabla\psi(\bm{y}),\bm{x}-\bm{y}\rangle\bigg\},
\label{BregmanDiv}	
\end{eqnarray}
and can be interpreted as the error at $\bm{x}$ due to first order Taylor approximation of $\psi(\cdot)$ about $\bm{y}$. In general, $D_{\psi}$ is non-symmetric and hence not a metric.    	
\end{definition}
With Definitions \ref{DefnMirrorMap} and \ref{DefnBregmanDiv} in place, the mirror descent algorithm associated with the mirror map $\psi(\cdot)$ is a modified version of (\ref{projgraddescent}), given by
\begin{subequations}
\begin{align}
\nabla\psi\left(\bm{y}(k+1)\right) &= \nabla\psi\left(\bm{x}(k)\right) - h\bm{g}({k}), \label{step1mirrordescent}\\
\bm{x}(k+1) &= \text{proj}_{\mathcal{X}}^{D_{\psi}}\left(\bm{y}(k+1)\right), \label{step2mirrordescent}
\end{align}
\label{mirrordescent}	
\end{subequations}
where the Bregman projection operator $\text{proj}_{\mathcal{X}}^{D_{\psi}}(\bm{\eta}) := \underset{\bm{\xi}\in\mathcal{X}}{\arg\min}D_{\psi}\left(\bm{\xi},\bm{\eta}\right)$, and $k=0,1,\hdots$.

The main insight behind (\ref{mirrordescent}) is the following. As the subgradient $\bm{g}({k})$ is an element of the dual space, the subtraction in (\ref{step1projgraddescent}) does not make sense unless the decision variable $\bm{x}$ in (\ref{projgradproblem}) belongs to a Hilbert space (since the dual space of a Hilbert space is isometrically isomorphic to the Hilbert space, thanks to the Riesz representation theorem \cite[Ch. 4]{rudin1987realandcomplex}). To circumvent this issue, (\ref{step1mirrordescent}) takes an element from the primal space to the dual space via $\bm{x}(k) \mapsto \nabla\psi(\bm{x}(k))$, performs the gradient update in the dual space, and maps back the updated value $\bm{y}(k+1)$ in the primal space. To ensure that $\bm{x}(k+1)$ be in the set $\mathcal{X}$, the Bregman projection is performed in (\ref{step2mirrordescent}). The choice of the mirror map is usually guided by the geometry of the set $\mathcal{X}$.

We note that (\ref{mirrordescent}) reduces to (\ref{projgraddescent}) by setting $\psi(\cdot) = \frac{1}{2}\parallel\cdot\parallel_{2}^{2}$ and ${\rm{dom}}(\psi) = \mathbb{R}^{n}$ (in this case, $D_{\psi}(\bm{x},\bm{y}) = \frac{1}{2}\parallel\bm{x}-\bm{y}\parallel_{2}^{2}$).

Of particular importance to us, is the choice $\psi(\bm{x}) \equiv -H(\bm{x})= \sum_{i=1}^{n}x_{i}\log x_{i}$ (the negative entropy), ${\rm{dom}}(\psi) = \mathbb{R}^{n}_{>0}$, resulting in 
\begin{eqnarray}
D_{\psi}(\bm{x},\bm{y}) = D_{\rm{KL}}\left(\bm{x}\parallel\bm{y}\right) - \bm{1}^{\top}\left(\bm{x}-\bm{y}\right),
\label{generalizedKL}	
\end{eqnarray}
the \emph{generalized Kullback-Leibler divergence}, named so because it equals $D_{\rm{KL}}\left(\bm{x}\parallel\bm{y}\right)$ when $\bm{x},\bm{y}\in\Delta^{n-1}$. In the opinion dynamics context, we set $\mathcal{X}\equiv\Delta^{n-1}\setminus\{\bm{e}_{1}, \hdots, \bm{e}_{n}\}$, and seek an equivalence between (\ref{DeGrootFriedkinMap}) and (\ref{projgradproblem}). Per Definition \ref{DefnMirrorMap}, notice that $\psi(\bm{x}) \equiv -H(\bm{x})$ is a valid mirror map since it is strictly convex and differentiable; furthermore, $\mathcal{X}\equiv\Delta^{n-1}\setminus\{\bm{e}_{1}, \hdots, \bm{e}_{n}\}\subset{\rm{cl}}({\rm{dom}}(\psi)) = \mathbb{R}^{n}_{\geq 0}$, ${\rm{range}}(\nabla\psi)={\rm{range}}(\bm{1}+\log\bm{x})=\mathbb{R}^{n}$, and $\parallel \nabla\psi \parallel_{2}\:\rightarrow +\infty$ as $\bm{x} \rightarrow {\rm{bdy}}({\rm{cl}}({\rm{dom}}(\psi)))$. Using (\ref{generalizedKL}), direct computation gives
\begin{eqnarray}
&&\text{proj}_{\Delta^{n-1}\setminus\{\bm{e}_{1}, \hdots, \bm{e}_{n}\}}^{D_{\psi}}(\bm{\eta}) := \underset{\bm{\xi}\in\Delta^{n-1}\setminus\{\bm{e}_{1}, \hdots, \bm{e}_{n}\}}{\arg\min}D_{\psi}\left(\bm{\xi},\bm{\eta}\right) \nonumber\\
&&= \displaystyle\frac{\bm{\eta}}{\bm{1}^{\top}\bm{\eta}}, \quad\text{for all}\quad\bm{\eta}\in{\rm{dom}}(\psi)=\mathbb{R}^{n}_{>0}.\label{generalizedKLprojection}	
\end{eqnarray}
Therefore, for the mirror map $\psi(\bm{x}) \equiv -H(\bm{x})$, the mirror descent algorithm (\ref{mirrordescent}) becomes
\begin{subequations}
\begin{align}
\bm{y}(k+1) &= \bm{x}(k) \odot \exp\left(- h\bm{g}({k})\right), \label{step1mirrordescentEntropy}\\
\bm{x}(k+1) &= \bm{y}(k+1)/\bm{1}^{\top}\bm{y}(k+1), \label{step2mirrordescentEntropy}
\end{align}
\label{mirrordescentEntropy}	
\end{subequations}
where $k=0,1,\hdots$. Notice that for $\bm{x}\in\Delta^{n-1}\setminus\{\bm{e}_{1}, \hdots, \bm{e}_{n}\}$, the map (\ref{DeGrootFriedkinMap}) is indeed in the form of a generalized Kullback-Leibler projection for a positive vector $\bm{c}\oslash(\bm{1}-\bm{x})$ onto the standard simplex. We next develop this correspondence between (\ref{DeGrootFriedkinMap}) and (\ref{mirrordescentEntropy}).


\section{Main Results}\label{SectionLabelMainResults}
In order to associate a variational problem of the form (\ref{projgradproblem}) with the DeGroot-Friedkin map, we transcribe (\ref{DeGrootFriedkinMap}) in the form (\ref{mirrordescentEntropy}) by setting
\begin{eqnarray}
\bm{c}\oslash(\bm{1}-\bm{x}) = \bm{x} \odot \exp\left(-h \bm{g}\right),\label{relatingOpt}	
\end{eqnarray}
where $\bm{g}\in\partial\phi(\bm{x})$, $\bm{x}\in\Delta^{n-1}\setminus\{\bm{e}_{1}, \hdots, \bm{e}_{n}\}$. Rearranging (\ref{relatingOpt}), we get
\begin{eqnarray}
\bm{g} = \displaystyle\frac{1}{h}\log\left(\bm{x}\odot(\bm{1}-\bm{x})\oslash\bm{c} \right),
\label{partialphipartialxi}	
\end{eqnarray}
which implies
\begin{subequations}
\begin{align}
&\phi(\bm{x}) = \frac{1}{h}\displaystyle\sum_{i=1}^{n}\int \log\left(\displaystyle\frac{x_{i}(1-x_{i})}{c_{i}} \right)\differential x_{i}\label{integralform}\\
&= \frac{1}{h}\left[\displaystyle\sum_{i=1}^{n}x_{i}\log\left(\frac{x_{i}}{c_{i}}\right)	- \left(1-x_{i}\right)\log\left(1-x_{i}\right) + (1-2x_{i})\right]\nonumber\\
&= \frac{1}{h}\left[D_{\rm{KL}}\left(\bm{x}\parallel\bm{c}\right) + H\left(\bm{1} - \bm{x}\right) + n - 2\right],\label{phiintermed}
\end{align}
\label{phi}
\end{subequations}
where we used $\sum_{i=1}^{n}x_{i}=1$. Since both $D_{\rm{KL}}(\cdot\parallel\cdot)$ and $H(\cdot)$ are nonnegative functions, hence from (\ref{phiintermed}) it follows that $\phi(\bm{x})\geq 0$ for all $\bm{x}\in\Delta^{n-1}$, $n\geq 3$. Furthermore, we have the following. 
\begin{lemma}\label{ThmConvexityphi}
The function $\phi\left(\cdot\right)$ in (\ref{phiintermed}) is strictly convex	 over $\Delta^{n-1}$ for $n\geq 3$.
\end{lemma}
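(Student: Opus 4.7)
The plan is to establish strict convexity by analyzing the Hessian $\bm{H}$ of $\phi$ directly. Since (\ref{phiintermed}) is separable across coordinates, $\bm{H}$ is diagonal, and term-by-term differentiation gives $[\bm{H}]_{ii} = \frac{1}{h}\bigl(\frac{1}{x_i}-\frac{1}{1-x_i}\bigr) = \frac{1}{h}\cdot\frac{1-2x_i}{x_i(1-x_i)}$, contributed respectively by the convex $D_{\rm{KL}}(\bm{x}\parallel\bm{c})$ piece and the concave $H(\bm{1}-\bm{x})$ piece. Because these diagonal entries change sign at $x_i=1/2$, $\bm{H}$ is not sign-definite on $\mathbb{R}^{n}$, so I need the stronger conclusion that $\bm{v}^{\top}\bm{H}\bm{v}>0$ whenever $\bm{v}\neq\bm{0}$ lies in the tangent space $T:=\{\bm{v}\in\mathbb{R}^{n}\,:\,\bm{1}^{\top}\bm{v}=0\}$ of the simplex.

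The key geometric observation is that, because $\sum_{i}x_{i}=1$, at most one coordinate $x_{i_{0}}$ can strictly exceed $1/2$, so $\bm{H}$ has at most one negative diagonal entry. In the easy case where no $x_{i}>1/2$, $\bm{H}$ is already PSD on $\mathbb{R}^{n}$ and positivity on $T$ follows at once. In the main case, with $a_{i_{0}}:=[\bm{H}]_{i_{0}i_{0}}<0$ and all other $a_{i}>0$, I would use $\bm{1}^{\top}\bm{v}=0$ to eliminate $v_{i_{0}}$ and thereby rewrite the quadratic form on $T$ as $\bm{w}^{\top}(\bm{D}+a_{i_{0}}\bm{1}\bm{1}^{\top})\bm{w}$ with $\bm{D}:=\mathrm{diag}(a_{i})_{i\neq i_{0}}\succ 0$. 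The matrix determinant lemma then makes positive definiteness equivalent to the sharp scalar inequality
\[
|a_{i_{0}}|\sum_{i\neq i_{0}}\frac{1}{a_{i}}\;<\;1.
\]

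I expect verifying this inequality to be the main obstacle. My plan is to exploit the fact that $f(q):=q(1-q)/(1-2q)$ is strictly convex on $[0,1/2)$ — a short calculation gives $f''(q)=2/(1-2q)^{3}>0$ — so the map $\bm{q}\mapsto\sum_{i\neq i_{0}}f(x_{i})$ is convex on the sub-simplex $\{q_{i}\geq 0,\;\sum_{i\neq i_{0}}q_{i}=1-x_{i_{0}}\}$ and is therefore maximized at one of its vertices, where the value equals exactly $f(1-x_{i_{0}})=1/|a_{i_{0}}|$. This delivers the required bound, strictly away from the boundary of the sub-simplex (i.e., whenever at least three coordinates of $\bm{x}$ are nonzero). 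Combined with the constraint elimination above, this yields positive definiteness of $\bm{H}$ on $T$ at every $\bm{x}$ in the relative interior of $\Delta^{n-1}$, which is the regime on which the DeGroot--Friedkin iteration actually evolves and in which the mirror-descent equivalence of the next section is invoked.
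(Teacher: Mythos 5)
Your proposal is correct in its essentials, but it takes a genuinely different and more self-contained route than the paper. The paper disposes of the lemma in one line: it rewrites the objective as $H(\bm{1}-\bm{x})-H(\bm{x})-(\log\bm{c})^{\top}\bm{x}$ and cites Vontobel's Theorem~20 for the strict simplex-convexity of $H(\bm{1}-\bm{x})-H(\bm{x})$, so the only work is adding a linear term. You instead prove that underlying fact from scratch by testing the diagonal Hessian $[\bm{H}]_{ii}=\frac{1}{h}\frac{1-2x_{i}}{x_{i}(1-x_{i})}$ on the tangent space $\{\bm{1}^{\top}\bm{v}=0\}$, and your chain of steps checks out: at most one diagonal entry is negative; eliminating $v_{i_{0}}$ turns the restricted quadratic form into a rank-one downdate $\bm{D}+a_{i_{0}}\bm{1}\bm{1}^{\top}$ whose positive definiteness is exactly $|a_{i_{0}}|\sum_{i\neq i_{0}}1/a_{i}<1$; and since $1/a_{i}=h\,f(x_{i})$ with $f(q)=q(1-q)/(1-2q)$ strictly convex on $[0,1/2)$ and $f(0)=0$, the convex function $\sum_{i\neq i_{0}}f(x_{i})$ on the sub-simplex $\sum_{i\neq i_{0}}x_{i}=1-x_{i_{0}}$ is strictly below its vertex value $h^{-1}\cdot 1/|a_{i_{0}}|\cdot h=f(1-x_{i_{0}})$ whenever at least two of those coordinates are positive, which holds in ${\rm{int}}(\Delta^{n-1})$ precisely because $n\geq 3$. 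A virtue of your argument is that it makes visible where $n\geq 3$ enters (for $n=2$ the sub-simplex is a single vertex, the inequality degenerates to equality, and the Hessian vanishes on $T$, matching the paper's footnote that the $n=2$ case is degenerate), whereas the citation hides this. Two caveats. In your ``easy case'' the phrase ``PSD on $\mathbb{R}^{n}$ so positivity on $T$ follows at once'' needs one more line: at most one $x_{i}$ can equal $1/2$ in ${\rm{int}}(\Delta^{n-1})$, so the kernel of $\bm{H}$ is at most ${\rm{span}}(\bm{e}_{i})$, which meets $T$ only at $\bm{0}$. More importantly, you prove strict convexity only on the relative interior, while the lemma asserts it on all of $\Delta^{n-1}$; you should say explicitly that you are proving the interior version. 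That restriction is defensible --- on a boundary edge such as $\bm{x}=(t,1-t,0,\dots,0)$ one computes $H(\bm{1}-\bm{x})-H(\bm{x})\equiv 0$, so $\phi$ is affine along that edge and the closed-simplex claim cannot hold literally --- and the relative interior is all that the paper's subsequent arguments use, but the mismatch with the stated lemma should be acknowledged rather than left implicit.
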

\begin{proof}
Notice that
\begin{eqnarray}
D_{\rm{KL}}\left(\bm{x}\parallel\bm{c}\right) + H\left(\bm{1} - \bm{x}\right) = H(\bm{1}-\bm{x}) - H(\bm{x}) \nonumber\\
\qquad - (\log\bm{c})^{\top}\bm{x}.	\label{DKLplusH1-x}
\end{eqnarray}
The following non-trivial\footnote[4]{Notice that $H(\bm{1}-\bm{x}) - H(\bm{x})$ is \emph{not} convex on $[0,1]^{n}$. Yet, the function $H(\bm{1}-\bm{x}) - H(\bm{x})$ is ``simplex-convex".} result was proved in  \cite[Theorem 20]{vontobel2013bethe}: the function $H(\bm{1}-\bm{x}) - H(\bm{x})$ is strictly convex for $\bm{x}\in\Delta^{n-1}$, $n\geq 3$. Therefore, (\ref{DKLplusH1-x}) being the sum of a strictly convex and a linear function, is also strictly convex in $\bm{x}\in\Delta^{n-1}$. From (\ref{phiintermed}), the statement follows.
\end{proof}
\begin{remark}
In \cite{lad2015extropy}, the quantity $H(\bm{1}-\bm{x})$, $\bm{x}\in\Delta^{n-1}$, was referred to as the ``extropy", and was argued to be a complimentary concept of the entropy $H(\bm{x})$. Like entropy, the extropy is permutation invariant, achieves maximum at the uniform distribution $\bm{1}/n$, and minimum at the simplex vertices $\bm{e}_{i}$, $i\in[n]$. The quantities entropy and extropy coincide for $n=2$, but are different for $n\geq 3$ (see e.g., \cite[Section 2]{lad2015extropy}).
\end{remark}

Lemma \ref{ThmConvexityphi} and its preceding discussion reveal that computing the fixed point $\bm{x}^{*}\in{\rm{int}}(\Delta^{n-1})$ for the DeGroot-Friedkin map is equivalent to solving a convex optimization problem over $\Delta^{n-1}\setminus\{\bm{e}_{1}, \hdots, \bm{e}_{n}\}$. We summarize this in the following Theorem.
\begin{theorem}\label{PropositionCvxProblem}
For $n\geq 3$, and for a given $\bm{c}\in{\rm{int}}(\Delta^{n-1})$, let $\bm{x}^{*}\in{\rm{int}}\left(\Delta^{n-1}\right)$ be the non-autocratic fixed point of the DeGroot-Friedkin map (\ref{DeGrootFriedkinMap}). Then $\bm{x}^{*}$ equals
\begin{subequations}
\begin{align}
	&\underset{\bm{x}\in\Delta^{n-1}\setminus\{\bm{e}_{1}, \hdots, \bm{e}_{n}\}}{\arg\min}\:\phi(\bm{x})\\
&= \underset{\bm{x}\in\Delta^{n-1}\setminus\{\bm{e}_{1}, \hdots, \bm{e}_{n}\}}{\arg\min}\:\big\{D_{\rm{KL}}\left(\bm{x}\parallel\bm{c}\right) + H\left(\bm{1} - \bm{x}\right)\big\}.\label{explicitopt}
\end{align}	
\label{varrecursion}
\end{subequations}	
\end{theorem}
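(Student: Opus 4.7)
The plan is to identify the non-autocratic fixed point $\bm{x}^{*}$ with the unique KKT point of the convex program $\min_{\bm{x}\in\Delta^{n-1}}\phi(\bm{x})$, then invoke Lemma~\ref{ThmConvexityphi} to promote stationarity to global optimality. By the construction (\ref{relatingOpt})-(\ref{phi}), the DeGroot-Friedkin recursion (\ref{DeGrootFriedkinMap}) is, on $\Delta^{n-1}\setminus\{\bm{e}_{1},\hdots,\bm{e}_{n}\}$, \emph{exactly} the entropic mirror descent iteration (\ref{mirrordescentEntropy}) applied to $\phi$ with $\bm{g}=\nabla\phi(\bm{x})$ (the subgradient reducing to the gradient since $\phi$ is smooth on ${\rm{int}}(\Delta^{n-1})$). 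Hence $\bm{x}\in{\rm{int}}(\Delta^{n-1})$ is a fixed point of $\bm{f}$ iff the unnormalized update $\bm{x}\odot\exp(-h\nabla\phi(\bm{x}))$ is a positive scalar multiple of $\bm{x}$; since $x_{i}>0$ for all $i$, this is equivalent to $\nabla\phi(\bm{x})=\lambda\bm{1}$ for some $\lambda\in\mathbb{R}$. This is precisely the KKT stationarity condition for $\min_{\bm{x}\in\Delta^{n-1}}\phi(\bm{x})$, with $\lambda$ the multiplier of $\bm{1}^{\top}\bm{x}=1$ and the positivity constraints inactive at interior points. Therefore $\bm{x}^{*}$, the interior fixed point guaranteed by \cite[Theorem 4.1]{jia2015opinion}, is a KKT point of this convex program.

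Next, I would invoke Lemma~\ref{ThmConvexityphi}: since $\phi$ is strictly convex on $\Delta^{n-1}$ for $n\geq 3$, any stationary point is the unique global minimizer of $\phi$ over the entire simplex. In particular, the minimum is attained strictly in the interior at $\bm{x}^{*}$, so excising the vertices $\{\bm{e}_{1},\hdots,\bm{e}_{n}\}$ from the feasible set preserves this minimizer, giving the first line of (\ref{varrecursion}). Substituting the form (\ref{phiintermed}) of $\phi$ and dropping the additive constant $n-2$ together with the positive multiplicative factor $1/h$ -- neither of which affects the argmin -- yields the objective $D_{\rm{KL}}(\bm{x}\parallel\bm{c})+H(\bm{1}-\bm{x})$, i.e., (\ref{explicitopt}).

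The main obstacle is the very first step: the claim that interior fixed points of $\bm{f}$ coincide with interior KKT points of $\phi$ rests on the integrability reconstruction (\ref{integralform})-(\ref{phi}) of $\phi$ from its prescribed gradient (\ref{partialphipartialxi}), and on checking that the sign and scaling in (\ref{relatingOpt}) are consistent with the normalization in the second branch of (\ref{DeGrootFriedkinMap}). This is a routine but conceptually crucial bookkeeping, since once the equivalence with entropic mirror descent on $\phi$ is in hand, strict convexity from Lemma~\ref{ThmConvexityphi} pins down the argmin uniquely and in the interior, and the second equality in the theorem is immediate.
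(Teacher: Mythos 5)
Your proposal is correct and follows essentially the same route as the paper: identify the DeGroot--Friedkin map with entropic mirror descent on $\phi$ via (\ref{relatingOpt})--(\ref{phi}), then use the strict convexity from Lemma~\ref{ThmConvexityphi} to pin down the unique interior argmin and drop the affine rescaling to get (\ref{explicitopt}). The only difference is one of emphasis: you spell out explicitly that an interior fixed point forces $\nabla\phi(\bm{x})=\lambda\bm{1}$ (the KKT stationarity condition), a step the paper leaves implicit, while the paper instead devotes its proof to verifying $\bm{x}^{*}\in{\rm{int}}(\Delta^{n-1})$ directly from the fixed-point relation (\ref{c2xstar}) --- a fact you take from the theorem's hypothesis, which is legitimate.
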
 
\begin{proof}
The equivalence between the mirror descent (\ref{projgradproblem}) with $\mathcal{X}\equiv\Delta^{n-1}\setminus\{\bm{e}_{1}, \hdots, \bm{e}_{n}\}$ and the DeGroot-Friedkin map is due to (\ref{relatingOpt}), (\ref{partialphipartialxi}), (\ref{phi}). The convexity of the objective follows from Lemma \ref{ThmConvexityphi}. What remains to prove is that we must have $\bm{x}^{*}\in{\rm{int}}\left(\Delta^{n-1}\right)$, i.e., $\bm{x}^{*}$ cannot be on the boundary of the simplex. One way to show this is to observe from (\ref{DeGrootFriedkinMap}) that $x_{i}^{*} \propto c_{i}/(1-x_{i}^{*})$, i.e.,
\begin{eqnarray}
	 c_{i} \propto x_{i}^{*}(1-x_{i}^{*}) \:\Leftrightarrow\: \bm{c} = \displaystyle\frac{\bm{x}^{*}\odot\left(\bm{1}-\bm{x}^{*}\right)}{1 - \parallel\bm{x}^{*}\parallel_{2}^{2}}.
	\label{c2xstar}
\end{eqnarray}
Since $c_{i} > 0$ for all $i\in[n]$, from (\ref{c2xstar}) it follows that $x_{i}^{*} > 0$, i.e., $\bm{x}^{*}\in{\rm{int}}\left(\Delta^{n-1}\right)$. We will see below that (\ref{c2xstar}) can also be derived from the conditions of optimality for (\ref{explicitopt}).
\end{proof}

An immediate corollary of the above is that the fixed point $\bm{x}^{*}\in{\rm{int}}\left(\Delta^{n-1}\right)$ is unique and its basin of attraction is $\Delta^{n-1}\setminus\{\bm{e}_{1}, \hdots, \bm{e}_{n}\}$. These facts were established in \cite{jia2015opinion} via non-smooth Lyapunov analysis.

Problem (\ref{varrecursion}) minimizes the extropy (i.e., entropy of complimentary opinion) while staying close to the vector $\bm{c}$ in Kullback-Leibler sense. This can be interpreted as follows. The entries of $\bm{c}$, termed as ``eigenvector centrality scores", reveal social influence of an individual. The entries of the argmin $\bm{x}^{*}$ reveal the individual's social power. The Kullback-Leibler term in the objective in (\ref{varrecursion}) implies that an individual's social power tends to be close to her social influence. The extropy term promotes collective non-uniformity in complimentary opinion, i.e., penalizes the ``spread" of the complimentary opinion $(\bm{1}-\bm{x})$ for the group. The overall objective in (\ref{varrecursion}) encapsulates the combined effect of these two tendencies.

We now show that permutation on the entries of $\bm{c}$ leads to the same permutation on the entries of $\bm{x}^{*}$.

\begin{theorem}\label{Thmrankxstarequalsrankc}
For a given $\bm{c}\in{\rm{int}}(\Delta^{n-1})$, let $\bm{x}^{*}$ be the argmin for the convex problem (\ref{varrecursion}). For any $n\times n$ permutation matrix $\bm{P}$, let
\[\bm{y}^{*} := \underset{\bm{y}\in\Delta^{n-1}\setminus\{\bm{e}_{1}, \hdots, \bm{e}_{n}\}}{\arg\min}\:\big\{D_{\rm{KL}}\left(\bm{y}\parallel\bm{P}\bm{c}\right) + H\left(\bm{1} - \bm{y}\right)\big\}.\]
Then $\bm{y}^{*} = \bm{P}\bm{x}^{*}$.	
\end{theorem}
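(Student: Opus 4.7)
My plan is to exploit the joint permutation equivariance of the objective and the feasible set, then invoke strict convexity (Lemma \ref{ThmConvexityphi}) for uniqueness. Concretely, I would substitute $\bm{y}=\bm{P}\bm{x}$ into the permuted problem and show the objective equals the original objective evaluated at $\bm{x}$, and that the feasible set is preserved.

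First I would verify the feasibility-set invariance: since $\bm{P}$ is a permutation matrix, $\bm{P}\bm{1}=\bm{1}$ and $\bm{P}\bm{x}\geq 0$ iff $\bm{x}\geq 0$, so $\bm{P}$ is a bijection of $\Delta^{n-1}$ onto itself. Moreover $\bm{P}$ merely permutes the vertex set $\{\bm{e}_{1},\hdots,\bm{e}_{n}\}$, so $\bm{y}\in\Delta^{n-1}\setminus\{\bm{e}_{1}, \hdots, \bm{e}_{n}\}$ if and only if $\bm{x}=\bm{P}^{\top}\bm{y}\in\Delta^{n-1}\setminus\{\bm{e}_{1}, \hdots, \bm{e}_{n}\}$.

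Next I would show the objective is invariant under the substitution $\bm{y}=\bm{P}\bm{x}$. Writing $\bm{P}$ as the matrix of a permutation $\pi$, a direct expansion gives
\begin{equation*}
D_{\rm{KL}}(\bm{P}\bm{x}\parallel\bm{P}\bm{c}) = \sum_{i=1}^{n}x_{\pi^{-1}(i)}\log\!\left(\frac{x_{\pi^{-1}(i)}}{c_{\pi^{-1}(i)}}\right) = D_{\rm{KL}}(\bm{x}\parallel\bm{c}),
\end{equation*}
and since $\bm{1}-\bm{P}\bm{x}=\bm{P}(\bm{1}-\bm{x})$ while $H(\cdot)$ is permutation invariant, $H(\bm{1}-\bm{P}\bm{x})=H(\bm{1}-\bm{x})$. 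Hence the objective of the permuted problem evaluated at $\bm{y}=\bm{P}\bm{x}$ equals the objective of the original problem evaluated at $\bm{x}$.

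Putting these together, the map $\bm{x}\mapsto\bm{P}\bm{x}$ is a bijection from the feasible set of the original problem onto the feasible set of the permuted problem that preserves the objective value pointwise. Therefore the minimizers correspond, i.e., $\bm{y}^{*}=\bm{P}\bm{x}^{*}$. Uniqueness of $\bm{y}^{*}$ (and of $\bm{x}^{*}$) follows from strict convexity of the objective established in Lemma \ref{ThmConvexityphi}, ruling out any other candidate. I do not foresee a serious obstacle here; the only subtle point is confirming that excising all vertices (rather than a single vertex) is what makes the feasible set invariant under an arbitrary permutation $\bm{P}$, which is why the argument would fail if one tried to remove only a specific vertex.
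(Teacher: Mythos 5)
Your proposal is correct and follows essentially the same route as the paper: a change of variables $\bm{y}=\bm{P}\bm{x}$ combined with permutation invariance of the objective (the paper routes this through the decomposition $H(\bm{1}-\bm{y})-H(\bm{y})-(\log(\bm{Pc}))^{\top}\bm{y}$ and the identity $\log(\bm{Pc})=\bm{P}\log\bm{c}$, whereas you verify the invariance of $D_{\rm{KL}}$ and the extropy term directly, which is equivalent). Your explicit checks that $\bm{P}$ preserves the feasible set $\Delta^{n-1}\setminus\{\bm{e}_{1},\hdots,\bm{e}_{n}\}$ and that strict convexity gives uniqueness of the minimizers are details the paper leaves implicit, and they tighten the argument without changing it.
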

\begin{proof}
We start by noting that 
\begin{eqnarray}
D_{\rm{KL}}\left(\bm{y}\parallel\bm{P}\bm{c}\right) + H\left(\bm{1} - \bm{y}\right) = H(\bm{1} - \bm{y}) - H(\bm{y}) \nonumber\\
\qquad - \left(\log\left(\bm{Pc}\right)\right)^{\top}\bm{y},
\label{permutec}	
\end{eqnarray}
and that $\log(\bm{Pc})=\bm{P}\log\bm{c}$. Since $\bm{P}^{\top} = \bm{P}^{-1}$, hence letting $\bm{z} := \bm{P}^{-1}\bm{y}$, we can rewrite the right-hand-side of (\ref{permutec}) as
\begin{eqnarray}
H(\bm{1} - \bm{z}) - H(\bm{z}) - \left(\log\bm{c}\right)^{\top}\bm{z},
\label{RHSinz}	
\end{eqnarray}
where we have used that $H(\bm{1} - \bm{P}\bm{z}) - H(\bm{Pz}) = H(\bm{1} - \bm{z}) - H(\bm{z})$, as both entropy and extropy are permutation invariant. Therefore,
\begin{subequations}
\begin{align}
	\bm{y}^{*} &= \underset{\bm{Pz}\in\Delta^{n-1}\setminus\{\bm{e}_{1}, \hdots, \bm{e}_{n}\}}{\arg\min}\big\{H(\bm{1} - \bm{z}) - H(\bm{z}) - \left(\log\bm{c}\right)^{\top}\bm{z}\big\}\nonumber\\
&= \bm{P}\bm{x}^{*}.\nonumber
\end{align}	
\end{subequations}
This completes the proof.
\end{proof}

For problem (\ref{varrecursion}), since the objective is convex, and the constraint $\bm{1}^{\top}\bm{x} - 1 = 0$ is linear, strong duality holds. Let $\nu\in\mathbb{R}$ be the Lagrange multiplier associated with the constraint $\bm{1}^{\top}\bm{x} - 1 = 0$. The corresponding Lagrangian 
\begin{eqnarray}
L(\bm{x},\nu) = H(\bm{1} - \bm{x}) - H(\bm{x}) - (\log\bm{c})^{\top}\bm{x} + \nu(\bm{1}^{\top}\bm{x} - 1)
\label{Lagrangian}	
\end{eqnarray}
yields the following Karush-Kuhn-Tucker (KKT) conditions for the optimal pair $(\bm{x}^{*},\nu^{*})$:
\begin{subequations}
\begin{align}
&\nabla	_{\bm{x}}L\big\vert_{\bm{x}=\bm{x}^{*}} = \bm{0},\nonumber\\
&\Leftrightarrow (x_{i}^{*})^{2} - x_{i}^{*} + c_{i}\exp\left(-(\nu^{*} + 2)\right) = 0, \:\forall\:i\in[n],\label{gradcondn}\\
&\bm{1}^{\top}\bm{x}^{*} = 1.\label{primalfeasibility}
\end{align}
\label{KKT}
\end{subequations}
Summing (\ref{gradcondn}) over $i=1,...,n$, then using (\ref{primalfeasibility}) and $\bm{1}^{\top}\bm{c}=1$ reveals that
\begin{eqnarray}
\exp\left(-(\nu^{*} + 2)\right) \:=\:  1 - \parallel\bm{x}^{*}\parallel_{2}^{2}.
\label{1minusnormsq}	
\end{eqnarray}
Using (\ref{1minusnormsq}) to substitute for $\exp\left(-(\nu^{*} + 2)\right)$ in (\ref{gradcondn}) results the map $\bm{x}^{*}\mapsto\bm{c}$, given by
\begin{eqnarray}
\bm{c} = \displaystyle\frac{\bm{x}^{*}\odot\left(\bm{1}-\bm{x}^{*}\right)}{1 - \parallel\bm{x}^{*}\parallel_{2}^{2}},
\label{casfnofxstar}	
\end{eqnarray}
which is what we obtained in (\ref{c2xstar}).

At this point, recall that the matrix $\bm{C}$ being doubly stochastic is equivalent to $\bm{c}=\bm{1}/n$. We next use (\ref{casfnofxstar}) to further prove that $\bm{x}^{*}=\bm{1}/n$ if and only if $\bm{c}=\bm{1}/n$. 
\begin{theorem}\label{centroidThm}
Let $\bm{x}^{*}$ be the 	argmin for the convex problem (\ref{varrecursion}). Then, $\bm{x}^{*}=\bm{1}/n$ if and only if $\bm{c}=\bm{1}/n$.
\end{theorem}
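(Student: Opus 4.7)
The plan is to establish the two implications separately. The forward direction is a one-line consequence of (\ref{casfnofxstar}), and the reverse direction is most cleanly obtained by invoking the permutation-equivariance established in Theorem \ref{Thmrankxstarequalsrankc}.

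For the forward direction, I would assume $\bm{x}^{*} = \bm{1}/n$ and substitute into (\ref{casfnofxstar}). Each entry of $\bm{x}^{*} \odot (\bm{1}-\bm{x}^{*})$ becomes $\tfrac{1}{n}\bigl(1-\tfrac{1}{n}\bigr)=\tfrac{n-1}{n^{2}}$, while $1-\|\bm{x}^{*}\|_{2}^{2}=1-\tfrac{1}{n}=\tfrac{n-1}{n}$. Dividing yields $c_{i}=1/n$ for every $i\in[n]$, so $\bm{c}=\bm{1}/n$.

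For the reverse direction, assume $\bm{c} = \bm{1}/n$. The key observation is that $\bm{P}(\bm{1}/n) = \bm{1}/n$ for every $n\times n$ permutation matrix $\bm{P}$, so $\bm{P}\bm{c} = \bm{c}$. Applying Theorem \ref{Thmrankxstarequalsrankc} (whose hypothesis $\bm{c}\in\text{int}(\Delta^{n-1})$ is satisfied by $\bm{1}/n$) then gives $\bm{P}\bm{x}^{*} = \bm{x}^{*}$ for every such $\bm{P}$. Since the only vector in $\Delta^{n-1}$ fixed by every permutation is $\bm{1}/n$, we conclude $\bm{x}^{*} = \bm{1}/n$.

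The main obstacle, such as it is, is the reverse direction if one insists on a derivation from (\ref{casfnofxstar}) alone. In that alternative route, the constancy of $\bm{c}$ would force $x_{i}^{*}(1-x_{i}^{*})$ to be equal for all $i$, so each $x_{i}^{*}$ would lie in some common pair $\{a,1-a\}$ determined by the single quadratic $t(1-t)=K$; one would then need to enumerate the number $k$ of entries equal to $a$ and check via $\bm{1}^{\top}\bm{x}^{*}=1$ that every mixed configuration $0<k<n$ either drives $a$ out of $(0,1)$ or places $\bm{x}^{*}$ on a simplex vertex, leaving $\bm{x}^{*}=\bm{1}/n$ as the only interior candidate. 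A further alternative is to verify directly that $\bm{x}=\bm{1}/n$ satisfies the KKT system (\ref{KKT}) with $\bm{c}=\bm{1}/n$ (the corresponding $\nu^{*}$ being consistent with (\ref{1minusnormsq})) and invoke uniqueness of the argmin via the strict convexity of Lemma \ref{ThmConvexityphi}. Both alternatives work, but the symmetry argument via Theorem \ref{Thmrankxstarequalsrankc} is the shortest and avoids any case analysis.
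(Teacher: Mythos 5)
Your proof is correct, but for the direction $\bm{c}=\bm{1}/n \Rightarrow \bm{x}^{*}=\bm{1}/n$ you take a genuinely different route from the paper. The paper works directly from (\ref{casfnofxstar}): with $c_{i}=c_{j}$ and the common denominator $1-\|\bm{x}^{*}\|_{2}^{2}$, it gets $x_{i}^{*}(1-x_{i}^{*})=x_{j}^{*}(1-x_{j}^{*})$, hence $(x_{i}^{*}-x_{j}^{*})(1-x_{i}^{*}-x_{j}^{*})=0$, and then rules out the branch $x_{i}^{*}+x_{j}^{*}=1$ by noting it would force the remaining $n-2$ coordinates to vanish, contradicting $\bm{x}^{*}\in{\rm{int}}(\Delta^{n-1})$ --- essentially the ``alternative route'' you sketch at the end, though the paper's disposal of the mixed case is a one-line interiority argument rather than an enumeration over $k$. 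Your primary argument instead invokes the permutation equivariance of Theorem \ref{Thmrankxstarequalsrankc} together with the fact that $\bm{1}/n$ is the unique permutation-invariant point of the simplex. This is shorter and entirely case-free, and it isolates the real reason the democratic fixed point arises: full symmetry of the problem data. The one step you should make explicit is that Theorem \ref{Thmrankxstarequalsrankc} gives $\bm{y}^{*}=\bm{P}\bm{x}^{*}$ where $\bm{y}^{*}$ is the argmin for the problem with parameter $\bm{P}\bm{c}=\bm{c}$; concluding $\bm{y}^{*}=\bm{x}^{*}$ (and hence $\bm{P}\bm{x}^{*}=\bm{x}^{*}$) requires uniqueness of the argmin, which is supplied by the strict convexity of Lemma \ref{ThmConvexityphi}. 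The paper's route buys self-containedness from the optimality condition (\ref{casfnofxstar}) alone; yours buys brevity at the cost of depending on Theorem \ref{Thmrankxstarequalsrankc} and Lemma \ref{ThmConvexityphi}. Your forward direction (substituting $\bm{x}^{*}=\bm{1}/n$ into (\ref{casfnofxstar})) coincides with the paper's.
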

\begin{proof}
For any $i\neq j$, using $c_{i} = c_{j} = 1/n$ in (\ref{casfnofxstar}), we obtain $x_{i}^{*}(1-x_{i}^{*}) = x_{j}^{*}(1-x_{j}^{*})$, since $0 < x_{i}^{*} < 1 \Rightarrow ||\bm{x}^{*}||^{2}\neq 1$, and $1 - \parallel\bm{x}^{*}\parallel_{2}^{2}$ = constant (from (\ref{1minusnormsq})). This gives $(x_{i}^{*} - x_{j}^{*})(1 - x_{i}^{*} - x_{j}^{*}) = 0$, for all $i,j=1,\hdots,n$. Notice that $x_{i}^{*} + x_{j}^{*} \neq 1$ since otherwise, remaining $(n-2)$ entries of the vector $\bm{x}^{*}$ would be zero, which contradicts the premise $\bm{x}^{*}\in{\rm{int}}(\Delta^{n-1})$. Hence $x_{i}^{*} = x_{j}^{*}$ for all $i,j=1,\hdots,n$. The condition $\sum_{i}x_{i}^{*}=1$ then yields $x_{i}^{*}=1/n$ for all $i\in[n]$. 

On the other hand,	directly substituting $x_{i}^{*}=1/n$ in (\ref{casfnofxstar}) results $c_{i}=1/n$ for all $i\in[n]$.
\end{proof}

\begin{figure}[tphb]
\centering
	\includegraphics[width=0.5\textwidth]{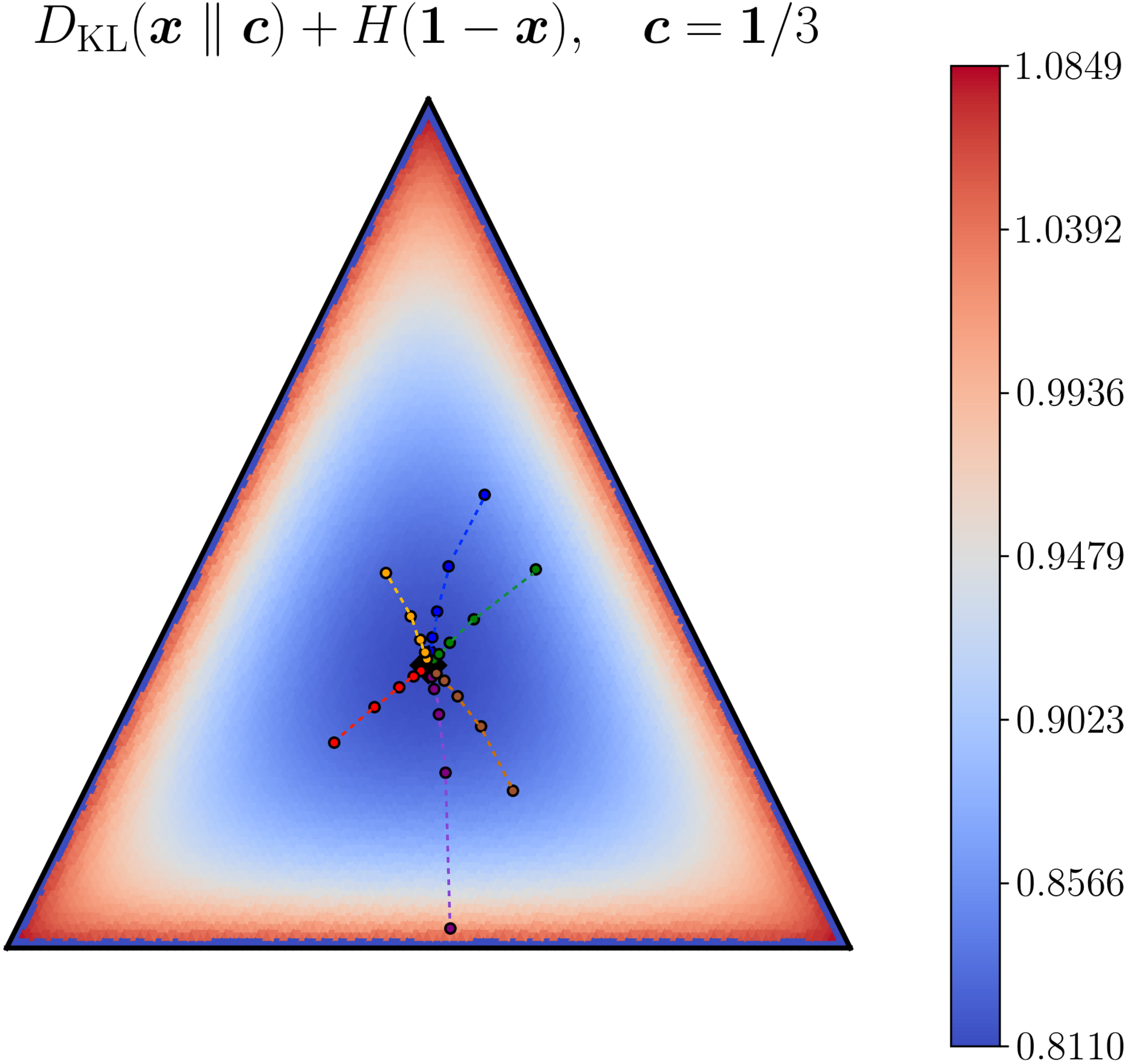}
\caption{The colormap of the convex objective function in (\ref{varrecursion}) for $n=3$ on the simplex $\Delta^{2}$ for $\bm{c}=\bm{1}/3$. In this figure, we also plot the fixed point $\bm{x}^{*}=\bm{1}/3$ (black diamond) and the first four iterates of six randomly chosen initial conditions (indicated by six different colored circles) for recursion (\ref{DeGrootFriedkinMap}), showing they converge to $\bm{x}^{*}$, which is the minimizer for the objective function in (\ref{varrecursion}).}
\label{UniformcFig}	
\end{figure}	
\begin{figure}[tphb]
\centering
	\includegraphics[width=0.5\textwidth]{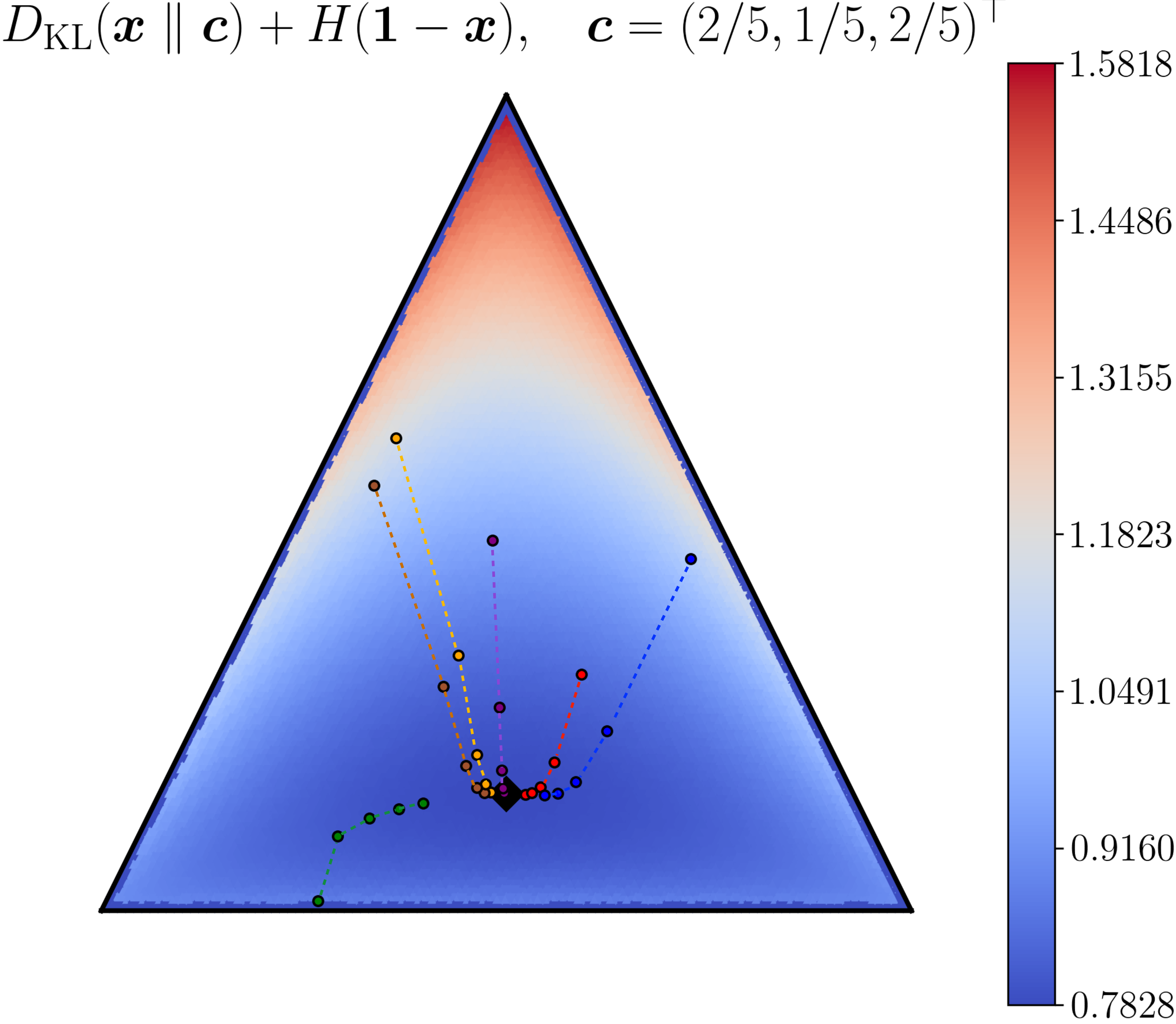}
\caption{The colormap of the convex objective function in (\ref{varrecursion}) for $n=3$ on the simplex $\Delta^{2}$ for $\bm{c}=(2/5, 1/5, 2/5)^{\top}$. In this figure, we also plot the fixed point $\bm{x}^{*}=(3/7, 1/7, 3/7)^{\top}$ (black diamond) and the first four iterates of six randomly chosen initial conditions (indicated by six different colored circles) for recursion (\ref{DeGrootFriedkinMap}), showing they converge to $\bm{x}^{*}$, which is the minimizer for the objective function in (\ref{varrecursion}).}
\label{NonuniformcFig}	
\end{figure}

\begin{remark}
Notice that for $\bm{c}=\bm{1}/n$, the problem (\ref{varrecursion}) reduces to computing the argmin of $H(\bm{1}-\bm{x}) - H(\bm{x})$ (due to (\ref{DKLplusH1-x})). Therefore, a corollary of Theorem \ref{centroidThm} is that $\bm{x}^{*}=\bm{1}/n$ is the minimizer of the convex function $H(\bm{1}-\bm{x}) - H(\bm{x})$.
\end{remark}

We now provide some numerical evidence to help visualize the development so far. In Fig. \ref{UniformcFig}, we plot the colormap of the objective function in (\ref{varrecursion}) for $n=3$ on the simplex $\Delta^{2}$ for $\bm{c}=\bm{1}/3$. This colormap suggests that the objective function achieves minimum at $\bm{x}=\bm{1}/3$, which is in accordance with Theorem \ref{centroidThm}. In the same figure, we overlay the fixed point $\bm{x}^{*}=\bm{1}/3$ (black diamond) and the first few iterates of six randomly chosen initial conditions (indicated by six different colored circles) for recursion (\ref{DeGrootFriedkinMap}), showing that all the iterates converge to $\bm{x}^{*}$, which is indeed the minimum of $D_{\rm{KL}}\left(\bm{x}\parallel\bm{c}\right) + H\left(\bm{1} - \bm{x}\right)$ over the simplex.

Likewise, in Fig. \ref{NonuniformcFig}, we plot the colormap of the objective function in (\ref{varrecursion}) on the simplex $\Delta^{2}$ for $\bm{c}=(2/5, 1/5, 2/5)^{\top}$. In this case, the fixed point $\bm{x}^{*} = (3/7, 1/7, 3/7)^{\top}$ (black diamond), which can be verified by direct substitution in (\ref{DeGrootFriedkinMap}). Again, in Fig. \ref{NonuniformcFig}, we overlay the first few iterates of six randomly chosen initial conditions (indicated by six different colored circles) for recursion (\ref{DeGrootFriedkinMap}) showing that all the iterates converge to $\bm{x}^{*}$, which is indeed the minimum of $D_{\rm{KL}}\left(\bm{x}\parallel\bm{c}\right) + H\left(\bm{1} - \bm{x}\right)$ over the simplex.

\section{Ramifications}\label{SectionLabelRamification}
Next, we collect some consequences which follow from our variational interpretation.

\subsection{Proximal Recursion}
A consequence of identifying $\phi(\cdot)$ with the mirror descent is that we can express the (transient) DeGroot-Friedkin iterates via proximal recursion:
\begin{equation}
	\bm{x}(k+1) = \!\!\!\underset{\bm{x}\in\Delta^{n-1}\setminus\{\bm{e}_{1},\hdots,\bm{e}_{n}\}}{\arg\min}\!\! D_{\rm{KL}}\left(\bm{x}\parallel\bm{x}(k)\right) \:+\: h \langle\bm{g}({k}),\bm{x}\rangle,
\label{proxrecursion}
\end{equation}
where $k=0,1,\hdots$, and $\bm{g}({k})\equiv\bm{g}(\bm{x}=\bm{x}(k))$ is given by (\ref{partialphipartialxi}). This proximal recursion perspective of mirror descent is due to \cite{beck2003mirror}. One can view (\ref{proxrecursion}) as minimizing the local linearization of $\phi$ while not being too far (in Kullback-Leibler sense) from the previous iterate. 

\subsection{Lagrange Dual Problem}
For theoretical completeness, we now derive the dual problem associated with the primal problem (\ref{explicitopt}). Since the constraint in (\ref{explicitopt}) is linear, we can derive the associated Lagrange dual problem using the Legendre-Fenchel conjugate (see e.g., \cite[p. 221, Section 5.1.6]{boyd2004convex}). Specifically, let $\phi_{0}(\bm{x}):=D_{\rm{KL}}(\bm{x}\parallel\bm{c}) + H(\bm{1}-\bm{x})$, $h_{1}(\bm{x}) := H(\bm{1}-\bm{x})$, $h_{2}(\bm{x}):=-H(\bm{x})$, and $h(\bm{x}):=h_{1}(\bm{x})+h_{2}(\bm{x})$. From (\ref{DKLplusH1-x}), $\phi_{0}(\bm{x}) = h(\bm{x}) + (-\log\bm{c})^{\top}\bm{x}$; its Legendre-Fenchel conjugate $\phi_{0}^{*}(\bm{y}) = h^{*}(\bm{y}+\log\bm{c})$. Thus, the Lagrange dual function $\zeta(\cdot)$ associated with the primal problem (\ref{explicitopt}) is
\begin{eqnarray}
\zeta(\nu) = -\nu -\phi_{0}^{*}(-\nu\bm{1}) = -\nu -h^{*}(-\nu\bm{1} + \log\bm{c}),
\label{DualFunction}	
\end{eqnarray}
where as before, $\nu\in\mathbb{R}$ is the Lagrange multiplier associated with the constraint $\bm{1}^{\top}\bm{x}-1=0$.

The Legendre-Fenchel conjugate $h^{*}(\cdot)$ in (\ref{DualFunction}) can be written as infimal convolution of $h_{1}^{*}$ and $h_{2}^{*}$, i.e.,
\begin{subequations}
	\begin{align}
		&h^{*}(\bm{y}) = {\rm{cl}}\left(\underset{\bm{u}+\bm{v}=\bm{y}}{\inf} h_{1}^{*}(\bm{u}) + h_{2}^{*}(\bm{v})\right)\\
		&= {\rm{cl}}\left(\underset{\bm{u}+\bm{v}=\bm{y}}{\inf} \bigg\{\displaystyle\sum_{i=1}^{n}\exp(u_{i}-1) + u_{i} + \exp(v_{i}-1)\bigg\}\right)\label{stepb}\\
		&= {\rm{cl}}\left(\underset{\bm{u}\in\mathbb{R}^{n}}{\inf} \bigg\{\displaystyle\sum_{i=1}^{n}\exp(u_{i}-1) + u_{i} + \exp(y_{i}-u_{i}-1)\bigg\}\right),\label{stepc}
	\end{align}
	\label{infconvolution}	
\end{subequations}
where we used (\ref{conjugatePostCompositionAffine}) to derive (\ref{stepb}). Performing the unconstrained minimization in (\ref{stepc}), we obtain
\begin{equation}
	h^{*}(\bm{y}) = {\rm{cl}}\left(\displaystyle\sum_{i=1}^{n}\bigg\{\rho(y_{i}) + 1 + \log\rho(y_{i}) + \frac{\exp(y_{i}-2)}{\rho(y_{i})}\bigg\}\right),
\label{hstar}
\end{equation}
where
\begin{eqnarray}
\rho(y_{i}) := -\frac{1}{2} + \displaystyle\sqrt{\frac{1}{4} + \exp(y_{i}-2)}.
\label{rhodef}	
\end{eqnarray}
Thus, the dual problem associated with the primal problem (\ref{explicitopt}) is $\underset{\nu\in\mathbb{R}}{\sup}\:\zeta(\nu)$, where $\zeta(\cdot)$ is given by (\ref{DualFunction}), and $h^{*}(\cdot)$ is given by (\ref{hstar}).

\subsection{Equivalent Natural Gradient Descent}
\emph{Natural gradient descent} \cite{amari1998natural} generalizes the standard gradient descent to a Riemannian manifold. Specifically, let $(\mathcal{M},\bm{M})$ be an $n$-dimensional Riemannian manifold with metric tensor $\bm{M}$. For an optimization problem of the form
\begin{eqnarray}
\underset{\bm{\mu}\in\mathcal{M}}{\text{minimize}}\:\varphi(\bm{\mu}),
\label{natgradproblem}	
\end{eqnarray}
the natural gradient descent on $(\mathcal{M},\bm{M})$ with fixed step size $h>0$, is given by
\begin{eqnarray}
\bm{\mu}(k+1) = \bm{\mu}(k) - h \bm{M(\bm{\mu}(k))}^{-1}\nabla_{\bm{\mu}}\varphi(\bm{\mu}(k)),
\label{natgraddescent}	
\end{eqnarray}
where $k=0,1,\hdots$, i.e., (\ref{natgraddescent}) steps in the steepest descent direction of $\varphi(\cdot)$ along the manifold $(\mathcal{M},\bm{M})$. We now exploit an equivalence established in \cite{raskutti2015} between the mirror descent with \emph{twice differentiable} mirror map $\psi$, and the natural gradient descent along the dual Riemannian manifold as follows. Since $\psi$ is strictly convex and twice differentiable, the Hessian $\nabla^{2}\psi$ is positive definite. Thus, the Bregman divergence $D_{\psi}: {\rm{dom}}(\psi)\times{\rm{dom}}(\psi)\mapsto\mathbb{R}_{\geq 0}$ induces the Riemannian manifold $({\rm{dom}}(\psi),\nabla^{2}\psi)$. Let $\mathcal{M}$ be the image of ${\rm{dom}}(\psi)$ under map $\nabla\psi$, i.e., $\bm{\mu} = \nabla_{\bm{x}}\psi(\bm{x})$, and let $\psi^{*}$ be the Legendre-Fenchel conjugate of $\psi$. Then, the dual Bregman divergence $D_{\psi^{*}}: \mathcal{M}\times\mathcal{M}\mapsto\mathbb{R}_{\geq 0}$ induces the Riemannian manifold $(\mathcal{M},\nabla^{2}\psi^{*})$. In \cite{raskutti2015}, $(\mathcal{M},\nabla^{2}\psi^{*})$ was interpreted as the dual Riemannian manifold of the primal Riemannian manifold $({\rm{dom}}(\psi),\nabla^{2}\psi)$. For the unconstrained case (see \cite[Theorem 1]{raskutti2015}), i.e., for $\mathcal{X}={\rm{cl}(\rm{dom})}(\psi)$ in (\ref{mirrordescent}), the mirror descent with mirror map $\psi$ is equivalent to the natural gradient descent (\ref{natgraddescent}) along the dual manifold $(\mathcal{M},\nabla^{2}\psi^{*})$. For the constrained case, i.e., for $\mathcal{X}\subset{\rm{cl}(\rm{dom})}(\psi)$ in (\ref{mirrordescent}), we modify (\ref{natgraddescent})
as \emph{projected} natural gradient descent, i.e.,
\begin{eqnarray}
\bm{\mu}(k+1) =\text{proj}_{\mathcal{X}^{*}}^{D_{\psi^{*}}}\left(\bm{\mu}(k) - h \bm{M(\bm{\mu}(k))}^{-1}\nabla_{\bm{\mu}}\varphi(\bm{\mu}(k))\right),
\label{constrainednatural}	
\end{eqnarray}
where $\text{proj}_{\mathcal{X}^{*}}^{D_{\psi^{*}}}(\bm{\lambda}) := \underset{\bm{\mu}\in\mathcal{X}^{*}}{\arg\min}\:D_{\psi^{*}}\left(\bm{\mu},\bm{\lambda}\right)$, and $\mathcal{X}^{*}:=\{\bm{\mu}\in\mathcal{M} \:\mid\: \bm{\mu}=\nabla_{\bm{x}}\psi(\bm{x}), \bm{x}\in \mathcal{X}\}$. Thus, in general, the mirror descent (\ref{mirrordescent}) is equivalent to the projected natural gradient descent (\ref{constrainednatural}).

For our particular instance (\ref{mirrordescentEntropy}), $\psi(\bm{x}) = -H(\bm{x})$ is twice differentiable. Furthermore, $\bm{\mu} = \nabla_{\bm{x}}\psi(\bm{x}) = \bm{1} + \log\bm{x}$, $\psi^{*}(\bm{\mu}) = \bm{1}^{\top}\exp(\bm{\mu}-\bm{1})$, $\nabla_{\bm{\mu}}^{2}\psi^{*}(\bm{\mu}) = {\rm{diag}}(\exp(\bm{\mu}-\bm{1}))$, and (\ref{constrainednatural}) becomes
\begin{align}
\bm{\mu}(k+1) = &\text{proj}_{\mathcal{X}^{*}}^{D_{\psi^{*}}}\left(\bm{\mu}(k) \:- \right.\nonumber\\
&\left. h\left(\nabla_{\bm{\mu}}^{2}\psi^{*}(\bm{\mu})\right)^{-1} \nabla_{\bm{\mu}}\phi(\nabla_{\bm{\mu}}\psi^{*}(\bm{\mu}))\right),
\label{ournatgraddescent}
\end{align}
where $\phi$ is given by (\ref{phiintermed}). Substituting  $\psi^{*}(\bm{\mu}) = \bm{1}^{\top}\exp(\bm{\mu}-\bm{1})$ in (\ref{ournatgraddescent}) gives
\begin{eqnarray}
\bm{\mu}(k+1) = \text{proj}_{\mathcal{X}^{*}}^{D_{\psi^{*}}}\left(\log\left(\bm{c} \oslash \left(\exp(\bm{1}) - \exp(\bm{\mu}(k))\right)\right)\right).	
\label{intermednatgrad}
\end{eqnarray}
The Lemma below helps in computing the projection in (\ref{intermednatgrad}).
\begin{lemma}\label{DualBregmanLemma}
Let $\bm{\mu}=\nabla_{\bm{x}}\psi(\bm{x})$, and $\bm{\lambda}=\nabla_{\bm{y}}\psi(\bm{y})$. Then $D_{\psi^{*}}(\bm{\mu},\bm{\lambda}) = D_{\psi}(\bm{y},\bm{x})$.	
\end{lemma}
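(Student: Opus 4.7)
The plan is to prove this by direct calculation from the definition of Bregman divergence, leveraging two standard facts from convex analysis that follow because $\psi$ is strictly convex and differentiable (a mirror map): (i) the Legendre-Fenchel conjugate $\psi^{*}$ is itself strictly convex and differentiable, with $\nabla\psi^{*} = (\nabla\psi)^{-1}$, so in particular $\bm{x} = \nabla_{\bm{\mu}}\psi^{*}(\bm{\mu})$ and $\bm{y} = \nabla_{\bm{\lambda}}\psi^{*}(\bm{\lambda})$; and (ii) the Fenchel-Young equality, namely $\psi(\bm{x}) + \psi^{*}(\bm{\mu}) = \langle\bm{\mu},\bm{x}\rangle$ whenever $\bm{\mu}=\nabla\psi(\bm{x})$, with the analogous identity for the pair $(\bm{y},\bm{\lambda})$.

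First I would write out the definition
\[
D_{\psi^{*}}(\bm{\mu},\bm{\lambda}) \;=\; \psi^{*}(\bm{\mu}) - \psi^{*}(\bm{\lambda}) - \langle\nabla_{\bm{\lambda}}\psi^{*}(\bm{\lambda}),\,\bm{\mu}-\bm{\lambda}\rangle,
\]
and immediately replace $\nabla_{\bm{\lambda}}\psi^{*}(\bm{\lambda})$ by $\bm{y}$ using fact (i). Next I would use Fenchel-Young to eliminate both $\psi^{*}(\bm{\mu})$ and $\psi^{*}(\bm{\lambda})$, substituting $\psi^{*}(\bm{\mu}) = \langle\bm{\mu},\bm{x}\rangle - \psi(\bm{x})$ and $\psi^{*}(\bm{\lambda}) = \langle\bm{\lambda},\bm{y}\rangle - \psi(\bm{y})$.

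After collecting terms, the inner products involving $\bm{\lambda}$ cancel, leaving
\[
D_{\psi^{*}}(\bm{\mu},\bm{\lambda}) \;=\; \psi(\bm{y}) - \psi(\bm{x}) - \langle\bm{\mu},\,\bm{y}-\bm{x}\rangle.
\]
Finally I would replace $\bm{\mu}$ by $\nabla_{\bm{x}}\psi(\bm{x})$ (which is simply its definition), and recognize the right-hand side as $D_{\psi}(\bm{y},\bm{x})$ per Definition \ref{DefnBregmanDiv}. Note the deliberate swap of the argument order, which is essential: the identity relates $D_{\psi^{*}}$ at $(\bm{\mu},\bm{\lambda})$ to $D_{\psi}$ at $(\bm{y},\bm{x})$, not $(\bm{x},\bm{y})$.

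There is no real obstacle here; the calculation is mechanical once the two convex-analytic ingredients are in hand. The only subtle point is justifying $\nabla\psi^{*} = (\nabla\psi)^{-1}$, which I would invoke by citing the standard Legendre duality result (e.g., \cite[Section 26]{rockaller1970}), noting that the mirror map conditions in Definition \ref{DefnMirrorMap} (strict convexity, differentiability, surjectivity of $\nabla\psi$ onto $\mathbb{R}^{n}$, and the boundary blow-up of $\|\nabla\psi\|_{2}$) ensure $\psi$ is of Legendre type, so this bijection holds.
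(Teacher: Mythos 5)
Your proof is correct and is precisely the calculation the paper compresses into its one-line remark that the result ``follows from the definitions of the Bregman divergence and the Legendre-Fenchel conjugate'': you expand $D_{\psi^{*}}(\bm{\mu},\bm{\lambda})$, use $\nabla\psi^{*}=(\nabla\psi)^{-1}$ and the Fenchel--Young equality, and the terms collapse to $D_{\psi}(\bm{y},\bm{x})$. The argument-order swap and the appeal to $\psi$ being of Legendre type are exactly the right points to flag; no gaps.
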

\begin{proof}
The proof follows from the definitions of the Bregman divergence and the Legendre-Fenchel conjugate.	
\end{proof}
In (\ref{intermednatgrad}), let $\bm{\lambda}:=\log\left(\bm{c} \oslash \left(\exp(\bm{1}) - \exp(\bm{\mu}(k))\right)\right)$, $\bm{\mu}(k+1)=\bm{1}+\log\widetilde{\bm{x}}$, and $\bm{\lambda} = \bm{1} + \log\bm{y}$. Thanks to Lemma \ref{DualBregmanLemma}, 
\begin{eqnarray}
\widetilde{\bm{x}} = \underset{\bm{x}\in\mathcal{X}}{\arg\min}\:D_{\psi}(\bm{y},\bm{x}),
\label{Mprojection}	
\end{eqnarray}
where $\mathcal{X}\equiv\Delta^{n-1}\setminus\{\bm{e}_{1}, \hdots, \bm{e}_{n}\}$, and $D_{\psi}$ is given by (\ref{generalizedKL}). Direct calculation yields $\widetilde{\bm{x}} = \bm{y}/\bm{1}^{\top}\bm{y}$. Therefore, (\ref{intermednatgrad}) gives
\begin{align}
	\!\!\!\bm{\mu}(k+1) \!=\! \bm{1}\!+\!\log\widetilde{\bm{x}} &= \bm{1}\!+\!\log\left(\bm{y}/\bm{1}^{\top}\bm{y}\right)	\nonumber\\
&= \log\left(\exp(\bm{\lambda})/\bm{1}^{\top}\exp(\bm{\lambda}-\bm{1})\right).
\label{FinalSimplification}
\end{align}
Substituting $\bm{\lambda}=\log\left(\bm{c} \oslash \left(\exp(\bm{1}) - \exp(\bm{\mu}(k))\right)\right)$ back in (\ref{FinalSimplification}) followed by algebraic simplification results
\begin{eqnarray}
\exp(\bm{\mu}(k+1)-\bm{1}) = \displaystyle\frac{\bm{c} \oslash \left(\bm{1} - \exp(\bm{\mu}(k)-\bm{1})\right)}{\bm{1}^{\top}\left(\bm{c} \oslash \left(\bm{1} - \exp(\bm{\mu}(k)-\bm{1})\right)\right)}.
\label{natgradrecursion}	
\end{eqnarray}
Since $\bm{\mu} = \bm{1} + \log\bm{x}$, hence the natural gradient recursion (\ref{natgradrecursion}) is exactly the DeGroot-Friedkin map (\ref{DeGrootFriedkinMap}).

\begin{remark}
The equivalence between the mirror descent (\ref{mirrordescentEntropy}) and the natural gradient descent (\ref{ournatgraddescent}) allows us to interpret the DeGroot-Friedkin map as steepest descent of $\phi(\nabla_{\bm{\mu}}\psi^{*}(\bm{\mu})) = D_{\rm{KL}}(\exp(\bm{\mu}-\bm{1})\parallel\bm{c}) + H(\bm{1} - \exp(\bm{\mu}-\bm{1}))$ along the manifold $(\mathcal{M},{\rm{diag}}(\exp(\bm{\mu}-\bm{1})))$, where $\mathcal{M}$ is the image of $\mathcal{X}\equiv\Delta^{n-1}\setminus\{\bm{e}_{1},\hdots,\bm{e}_{n}\}\subset{\rm{dom}}(\psi)\equiv\mathbb{R}^{n}_{>0}$ under map $\nabla_{\bm{x}}\psi = \bm{1} + \log\bm{x}$. 	In other words, the steepest descent occurs on the space of (shifted) log-likelihood.
\end{remark}

\begin{remark}
	In the information geometry literature \cite{csiszar1975divergence,amari2007methods}, (\ref{Mprojection}) is called the \emph{moment or M-projection} while (\ref{generalizedKLprojection}) is called the \emph{information or I-projection}. For arbitrary $\mathcal{X}$, (\ref{generalizedKLprojection}) and (\ref{Mprojection}) are not equal in general. 
\end{remark}


\section{Conclusions}\label{SectionLabelConclusions}
The DeGroot-Friedkin model for opinion dynamics describes the evolution of social power as a group of individuals discuss a sequence of issues over a network. We show that the DeGroot-Friedkin dynamics has a variational interpretation, i.e., the group of individuals collectively minimize a convex function of the opinions or self-weights. In particular, we prove that the nonlinear recursion associated with the DeGroot-Friedkin map can be viewed as entropic mirror descent over the standard simplex. Our variational formulation recovers known properties of the DeGroot-Friedkin map which were proved earlier via non-smooth Lyapunov analysis. Furthermore, the mirror descent framework reveals new interpretations of the DeGroot-Friedkin dynamics -- as a proximal recursion, and as a steepest descent on the space of log-likelihood. We hope that our results will motivate further investigations of opinion dynamics models from a variational perspective. Future work will involve numerical results based on real social network datasets.




\end{document}